\patchcmd{\section}{\scshape}{\bfseries}{}{}
\patchcmd{\subsection}{\bfseries}{\itshape}{}{}
\def\@seccntformat#1{%
  \protect\textup{\protect\@secnumfont
    \ifnum\pdfstrcmp{section}{#1}=0 \bfseries\fi
    \ifnum\pdfstrcmp{subsection}{#1}=0 \itshape \fi
    \csname the#1\endcsname
    \protect\@secnumpunct
  }%
}  
\theoremstyle{plain}
\newtheorem{theorem}{Theorem}[section]
\newtheorem{corollary}[theorem]{Corollary}
\theoremstyle{definition}
\newtheoremstyle{note}
{3pt}
{3pt}
{\itshape}
{}
{\itshape}
{:}
{.5em}
{}
\theoremstyle{note}
\theoremstyle{plain} 
\newcommand{\thistheoremname}{}
\newtheorem*{genericthm}{\thistheoremname}
\numberwithin{equation}{section}
\newcommand{\acknowledge}{\subsection*{Acknowledgements}}
\def\Dbar{\leavevmode\lower.6ex\hbox to 0pt{\hskip-.23ex \accent"16\hss}D}
\begin{document}

\title{Determinants, Choices and Combinatorics}
\author{J. Malkoun}
\address{Department of Mathematics and Statistics\\
Faculty of Natural and Applied Sciences\\
Notre Dame University-Louaize, Zouk Mosbeh\\
P.O.Box: 72, Zouk Mikael,
Lebanon}
\email{joseph.malkoun@ndu.edu.lb}

\date{Received: date / Accepted: date}
\maketitle

\begin{abstract} We prove a formula which generalizes both Onn's colorful determinantal formula, related to Rota's basis conjecture, and Svrtan's $n!$ formula, related to 
the Atiyah-Sutcliffe problem. In some cases, our formula allows us to prove some results similar in spirit to the statement of Rota's basis conjecture. 
We prove such a result using Svrtan's $n!$ formula, generalizing one of Svrtan's arguments to a combinatorial setting.
\end{abstract}

\maketitle

\section{Introduction} \label{intro}

In 1989, Rota conjectured his famous basis conjecture which, at the time of writing, is still open in general.

\begin{theorem} Let $V$ be a finite-dimensional vector space over a field $\mathbb{F}$. Suppose, $\prescript{1}{}{A},\ldots,\prescript{n}{}{A}$ are $n$ 
bases of $V$. Then for each $i$, there is a linear order of $\prescript{i}{}{A}$, say 
\[\prescript{1}{}{A} = 
\{ \prescript{1}{}{a}_1, \prescript{1}{}{a}_2, \ldots, \prescript{1}{}{a}_n \}; \, \ldots \,; \prescript{n}{}{A} = \{ \prescript{n}{}{a}_1, \prescript{n}{}{a}_2, \ldots, \prescript{n}{}{a}_n  \}, \] 
such that 
\[ \prescript{}{1}{C} = \{ \prescript{1}{}{a}_1, \prescript{2}{}{a}_1, \ldots, \prescript{n}{}{a}_1 \}; \, \ldots \, ;\prescript{}{n}{C} = \{ \prescript{1}{}{a}_n, \prescript{2}{}{a}_n, \ldots, \prescript{n}{}{a}_n \}\] 
are $n$ bases.
\end{theorem}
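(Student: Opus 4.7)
The displayed statement is labelled a theorem but the surrounding prose explicitly says that Rota's basis conjecture is open in general, so my plan is not a complete proof but an outline of the attack that the apparatus developed in this paper (Onn's colorful determinantal identity and Svrtan's $n!$ formula) seems designed to support.

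First I would rephrase the conclusion as a non-vanishing statement. Place the vectors $\prescript{i}{}{a}_j$ as the entries of an $n\times n$ array whose $i$th row lists $\prescript{i}{}{A}$; Rota's condition then asks for permutations $\sigma_1,\dots,\sigma_n\in S_n$ such that
\[
\prod_{j=1}^{n}\det\bigl(\prescript{1}{}{a}_{\sigma_1(j)},\ldots,\prescript{n}{}{a}_{\sigma_n(j)}\bigr)\neq 0.
\]
The plan is to exhibit a signed sum of such products, taken over a carefully chosen family of tuples $(\sigma_1,\dots,\sigma_n)$, that collapses to a nonzero expression built only from the row determinants $\det(\prescript{i}{}{A})$. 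Because each row is a basis by hypothesis, the collapsed expression is nonzero, so at least one summand on the left-hand side is nonzero and supplies the desired Latin arrangement.

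Concretely I would try to instantiate the generalized formula (the one the abstract advertises as a common extension of Onn's and Svrtan's identities) on the specific family of colorings indexed by the bases $\prescript{1}{}{A},\dots,\prescript{n}{}{A}$. Onn's colorful determinantal formula already yields identities of this shape in a multilinear colored setting, and Svrtan's $n!$ formula produces a clean factorization in the Atiyah--Sutcliffe context; the hope is that the common generalization has enough symmetry to factor through the product of row determinants while still summing over a rich enough set of column permutations to contain a Latin-square choice.

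The hard part, and the reason the conjecture has resisted attack since 1989, will be exactly the combinatorial bookkeeping at this last step: once one fixes a signed family of $(\sigma_1,\dots,\sigma_n)$, either the sum factors too aggressively (so that it vanishes identically on $n$-tuples of bases for trivial reasons), or it fails to factor at all and one is left with an expression whose non-vanishing is no easier than the original statement. I would therefore not expect to resolve the general case, and would aim instead at the restricted variants the abstract points to, using the generalized formula as the non-vanishing input for a colored subfamily (e.g.\ where the bases share additional algebraic structure), where the cancellation pattern becomes tractable.
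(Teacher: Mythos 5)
You have read the situation correctly: despite the \texttt{theorem} environment, the displayed statement is Rota's basis conjecture, which the paper's opening sentence flags as still open, and the paper supplies no proof of it---nor do you. Your outline of the attack (sum a signed expression over tuples $(\sigma_1,\dots,\sigma_n)$, hope it factors through $\prod_i \det(\prescript{i}{}{A})$, conclude that some summand is nonzero) is exactly the strategy the paper executes in Section~3. The factorization you were uncertain about is made completely explicit there: specializing Theorem~\ref{main_thm} to $\Sigma=\Sigma_n\times\cdots\times\Sigma_n$ ($n$ factors) and $f(\mathcal{A})=\prod_{j=1}^n\det(\prescript{1}{}{A}^j,\dots,\prescript{n}{}{A}^j)$ yields Onn's identity
\[
\sum_{\sigma\in\Sigma}\operatorname{sgn}(\sigma)\prod_{j=1}^n\det\bigl(\prescript{1}{}{A}^{\sigma_1(j)},\dots,\prescript{n}{}{A}^{\sigma_n(j)}\bigr)=l(n)\prod_{i=1}^n\det(\prescript{i}{}{A}),
\]
where $l(n)$ is the number of even minus the number of odd $n\times n$ Latin squares. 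So the ``combinatorial bookkeeping'' you anticipated collapses to a single known scalar. Your worry that the sum might ``factor too aggressively'' and vanish for trivial reasons is precisely what happens for odd $n$, where $l(n)=0$; for even $n$, non-vanishing of $l(n)$ is the Alon--Tarsi conjecture, which is the genuine remaining obstruction and is open. Your proposal is faithful in spirit but slightly understates what the paper achieves: conditional on Alon--Tarsi, the argument in Section~3 is a full proof for even $n$, and Theorem~\ref{second_thm} \emph{unconditionally} establishes a Rota-type result in the spinor-basis setting via Svrtan's $n!$ formula, where the corresponding invariant $\mathcal{I}(f,n)=n!$ is nonzero by inspection rather than by conjecture.
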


The conjecture originated in Rota's work in invariant theory, and implies some identities in invariant 
theory and representation theory (cf. \cite{HR1994} and \cite{O1997}). The literature 
surrounding Rota's basis conjecture is vast, and shall not be reviewed here. The reader may look for instance at \cite{BD2015}, \cite{Chow1995}, 
\cite{CFGV2003}, \cite{C2009}, \cite{GH2006}, \cite{GW2007} and the references therein.

We do mention though S. Onn's article \cite{O1997}, where he proves his colorful determinantal formula, and uses it to show that the 
Alon-Tarsi conjecture on Latin squares implies Rota's basis conjecture for even $n$. The Alon-Tarsi conjecture on Latin squares states that if 
$n$ is even, then the number of even $n$-by-$n$ Latin squares is different from the number of odd $n$-by-$n$ Latin squares. 

The Atiyah--Sutcliffe problem on configurations of points (\cite{Ati-2000}, \cite{Ati-2001} and 
\cite{Ati-Sut-2002}) is a geometric problem with roots in Physics (\cite{BR1997}). Indeed, in (\cite{BR1997}), Berry and Robbins 
propose a quantum-mechanical and geometric approach to the spin-statistics theorem, to be contrasted with the standard argument 
which relies on quantum field theory. Their discussion there was mostly for $2$ or $3$ identical particles. In order to extend their work to $n$ 
identical particles, they needed a certain equivariant map to exist. This led to the so-called \emph{Berry-Robbins problem}. 
\\
\smallskip
\\
\textbf{Berry-Robbins Problem:} Does there exist, for each $n$, a continuous map 
\[ f_n: C_n(\mathbb{R}^3) \to U(n)/T^n \]
from the configuration space $C_n(\mathbb{R}^3)$ of $n$ distinct points in $\mathbb{R}^3$ into the complete flag manifold $U(n)/T^n$, such that 
$f_n$ is equivariant under the action of the symmetric group $\Sigma_n$? More precisely, $\Sigma_n$ acts on a configuration of $n$ distinct points 
in $\mathbb{R}^3$ by permuting the $n$ points of that configuration, while it acts by permuting the columns of an element of $U(n)$, and this 
action induces an action on the quotient manifold $U(n)/T^n$.
\\
\smallskip

In \cite{Ati-2000}, Atiyah solved positively the Berry-Robbins problem, but the maps $f_n$ in that article were not 
very satisfactory: they were not smooth for instance. This led Atiyah to consider other candidate maps for $f_n$, which in 
turn led to the Atiyah-Sutcliffe conjectures, the area being sometimes referred to as the Atiyah-Sutcliffe problem on configurations of points. 
Consider $n$ distinct points $x_i$ in $\mathbb{R}^3$, for $1 \leq i \leq n$. To each pair $(i,j)$, $1 \leq i,j \leq n$ and 
$i \neq j$, form the unit vector $v_{ij}$ in the direction from $x_i$ to $x_j$, and then let $p_{ij}$ be the linear polynomial 
with root $v_{ij}$, after identifying the unit sphere of directions in $\mathbb{R}^3$ with the Riemann sphere using 
stereographic projection. For each $i$ between $1$ and $n$, denote by $p_i$ the polynomial of degree less than $n$ 
defined by
\[
p_i = \prod_{j \neq i} p_{ij}
\]
The polynomials $p_i$ were conjectured by Atiyah in \cite{Ati-2000}, 
\cite{Ati-2001} to be linearly independent over $\mathbb{C}$, for any given configuration $(x_i)$ of $n$ distinct 
points in $\mathbb{R}^3$.  The conjecture 
was further refined by Atiyah together with Sutcliffe in \cite{Ati-Sut-2002}, where two 
successively stronger conjectures were also made. In 2018, linear independence was proved in \cite{AM2018}.

Svrtan in \cite{S2014} considered not only the Atiyah-Sutcliffe determinant, relevant to the Atiyah-Sutcliffe conjectures, but also 
some modified determinant functions, corresponding to various subgraphs of the complete unoriented graph on $n$ vertices, with 
the Atiyah-Sutcliffe determinant being one of them. There he proved his $n!$ formula, which in particular shows that, given any 
initial configuration of $n$ distinct points in $\mathbb{R}^3$, one of these 
modifed determinant functions must be non-zero.

In this article, we prove a formula (cf. Thm. \ref{main_thm}) in section 2 generalizing both Onn's colorful determinantal formula in \cite{O1997} and Svrtan's $n!$ formula in \cite{S2014}, and show 
how to obtain these two formulas as special cases in sections 3 and 4 respectively. In the process, we also generalize Svrtan's argument to a combinatorial setting (cf. Thm. \ref{second_thm}).

\section{An identity involving permutation groups and determinants} \label{main}

In this section, we introduce a general formula which includes as special cases Onn's colorful determinantal formula, as well as a formula by Svrtan, and more. Let $\mathbb{F}$ be 
a fixed field. All matrices in this section will be assumed to have entries in the ground field $\mathbb{F}$. Given a square matrix $A$ of dimension 
$n$-by-$n$, we denote by $[A]$ the $n$-tuple of its columns, namely $[A] = (a_1,\cdots,a_n)$, where $a_i$ is the $i$-th column of $A$. Let 
$\prescript{1}{}{A},\cdots,\prescript{k}{}{A}$ be $k$ square matrices of dimensions $n_1$-by-$n_1$,..., $n_k$-by-$n_k$, respectively. Given the $k$-tuple of square matrices 
$\mathcal{A} = (\prescript{1}{}{A},\ldots,\prescript{k}{}{A})$, we denote by
\[ [\mathcal{A}] = ([\prescript{1}{}{A}],\ldots,[\prescript{k}{}{A}]) \]
Let
\[ \Sigma = \Sigma_{n_1} \times \cdots \times \Sigma_{n_k} \] 
where $\Sigma_n$ is the symmetric group on $[n] = \{1,\ldots,n\}$. There is a natural action of $\Sigma$ on the space of all $k$-tuples of square matrices as above. Namely, if 
$\sigma \in \Sigma$, we write $\sigma = (\sigma_1,\ldots,\sigma_k)$ where $\sigma_i \in \Sigma_{n_i}$, and then define
\begin{equation} \sigma.\mathcal{A} =  (\sigma_1.(\prescript{1}{}{A}),\ldots,\sigma_k.(\prescript{k}{}{A})) \label{action} \end{equation}
where 
\[ (\rho.A)_{ij} = A_{i \rho^{-1}(j)} \qquad \text{($1 \leq i,j \leq n$)} \]
for any $n$-by-$n$ matrix $A$ and any $\rho \in \Sigma_n$. 
Thus we see that an element $\sigma \in \Sigma$ permutes the columns of each matrix 
$\prescript{i}{}{A}$, for $1 \leq i \leq k$.

Let $V^n$ denote the $n$-dimensional vector space $\mathbb{F}^n$ over $\mathbb{F}$, and denote by $V_n$ its dual vector space. Consider an element $f$ of the following tensor product space:
\begin{equation} f \in \bigotimes_{i=1}^k (V_{n_i})^{\otimes_{n_i}} \end{equation}
where $(V_n)^{\otimes_n}$ denotes the $n$-th tensor power of the vector space $V_n$.
We denote by $f(\mathcal{A}) \in \mathbb{F}$ the evaluation of the multilinear form $f$ at the ordered set of column vectors of $\mathcal{A}$ 
(strictly speaking, one should write $f([\mathcal{A}])$ instead, but we simplify the notation by dropping the square 
brackets). Often in applications, one is interested in an alternating sum of the form
\[ \sum_{\sigma \in \Sigma} \operatorname{sgn}(\sigma) f(\sigma^{-1}.\mathcal{A}) \]
where
\[ \operatorname{sgn}(\sigma) = \prod_{i=1}^k \operatorname{sgn}(\sigma_i) \]
We have the following theorem.
\begin{theorem} If $\mathcal{A} = (\prescript{1}{}{A},\ldots,\prescript{k}{}{A})$ is a $k$-tuple of square matrices and $f$ as above, then
\begin{equation} \sum_{\sigma \in \Sigma} \operatorname{sgn}(\sigma) f(\sigma^{-1}.\mathcal{A}) = \mathcal{I}(f,\mathbf{n}) \operatorname{det}(\prescript{1}{}{A})\cdots \operatorname{det}(\prescript{k}{}{A}) \label{formula}
\end{equation}
where $\operatorname{det}(A)$ denotes the determinant of a square matrix $A$, and $\mathcal{I}(f,\mathbf{n}) \in \mathbb{F}$ is a scalar depending only on $f$ and on the \emph{shape} 
$\mathbf{n} = (n_1,\ldots,n_k)$ of $\mathcal{A}$. \label{main_thm} 
\end{theorem}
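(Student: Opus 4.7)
The plan is to recognize that the left-hand side, viewed as a function
\[ F(\mathcal{A}) = \sum_{\sigma \in \Sigma} \operatorname{sgn}(\sigma) f(\sigma^{-1}.\mathcal{A}), \]
is multilinear and alternating in the columns of each $\prescript{i}{}{A}$ separately, and then to invoke the uniqueness (up to scalar) of top-degree alternating multilinear forms on $\mathbb{F}^{n_i}$.

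First I would verify multilinearity. Since $f$ is by construction a multilinear form in each column of each $\prescript{j}{}{A}$, and since applying $\sigma^{-1}$ merely permutes these columns without mixing them, each term $f(\sigma^{-1}.\mathcal{A})$ is multilinear in the columns of each $\prescript{i}{}{A}$; hence so is $F$.

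Next I would verify the alternating property in the block of columns of $\prescript{i}{}{A}$ for each fixed $i$. Given $\tau \in \Sigma_{n_i}$, embedded in $\Sigma$ as identity on the other factors, I compute
\[ F(\tau.\mathcal{A}) = \sum_{\sigma \in \Sigma} \operatorname{sgn}(\sigma)\, f\bigl((\tau^{-1}\sigma^{-1})^{-1}\ldots\bigr) \]
and substitute $\sigma' = \tau^{-1}\sigma$. Using that the action is a left action, $\sigma^{-1}\tau.\mathcal{A} = (\tau^{-1}\sigma)^{-1}.\mathcal{A} = \sigma'^{-1}.\mathcal{A}$, together with $\operatorname{sgn}(\sigma) = \operatorname{sgn}(\tau)\operatorname{sgn}(\sigma')$, this gives $F(\tau.\mathcal{A}) = \operatorname{sgn}(\tau) F(\mathcal{A})$. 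Specializing to a transposition shows $F$ changes sign when two columns of $\prescript{i}{}{A}$ are swapped, hence vanishes whenever two columns of some $\prescript{i}{}{A}$ coincide. Combined with multilinearity, this makes $F$ an alternating multilinear form in the $n_i$ columns of $\prescript{i}{}{A}$ for each $i$.

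Since the space of alternating multilinear $n_i$-forms on $\mathbb{F}^{n_i}$ is one-dimensional, generated by $\det$, an iterated application (one block at a time) forces
\[ F(\mathcal{A}) = c(f,\mathbf{n})\, \operatorname{det}(\prescript{1}{}{A})\cdots \operatorname{det}(\prescript{k}{}{A}) \]
for some scalar $c(f,\mathbf{n})$ independent of $\mathcal{A}$. To identify this scalar with a canonical quantity $\mathcal{I}(f,\mathbf{n})$, I would evaluate both sides at the tuple of identity matrices $(I_{n_1},\ldots,I_{n_k})$, whose product of determinants is $1$; this yields $\mathcal{I}(f,\mathbf{n}) = \sum_{\sigma \in \Sigma} \operatorname{sgn}(\sigma)\, f(\sigma^{-1}.(I_{n_1},\ldots,I_{n_k}))$, manifestly depending only on $f$ and the shape. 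There is no real obstacle here: the argument is pure multilinear algebra, and the only point requiring some care is the bookkeeping in the change-of-variables step verifying the alternating property under the chosen convention for the action in \eqref{action}.
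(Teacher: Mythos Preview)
Your argument is correct and is essentially the same as the paper's proof: the paper phrases the skew-symmetrization as landing in the one-dimensional space $\bigotimes_{i=1}^k \Lambda^{n_i} V_{n_i}$, which is exactly your observation that $F$ is multilinear and alternating in each block of columns and hence a scalar multiple of $\prod_i \det(\prescript{i}{}{A})$. Your explicit change-of-variables verification and evaluation at the identity tuple are the concrete counterparts of the paper's one-line tensor statement and its Corollary.
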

\begin{corollary} As a corollary of the previous theorem, one can calculate $\mathcal{I}(f,\mathbf{n})$ by simply assuming that each $\prescript{i}{}{A}$ is an identity matrix, for $1 \leq i \leq k$. Namely, if we let
\[ I = (I_{n_1},\ldots,I_{n_k}) \]
where $I_{n_i}$ denotes the $n_i$-by-$n_i$ identity matrix, then
\begin{equation} \mathcal{I}(f,\mathbf{n}) = \sum_{\sigma \in \Sigma} \operatorname{sgn}(\sigma) f(\sigma^{-1}.I)\end{equation} \label{corollary}
\end{corollary}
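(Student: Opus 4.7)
The proof is essentially a single substitution, so I will keep the proposal brief. My plan is to apply Theorem \ref{main_thm} directly to the specific $k$-tuple $\mathcal{A} = I = (I_{n_1}, \ldots, I_{n_k})$ and simplify.

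With this choice, the right-hand side of formula (\ref{formula}) reads
\[ \mathcal{I}(f,\mathbf{n})\, \operatorname{det}(I_{n_1}) \cdots \operatorname{det}(I_{n_k}). \]
Since $\operatorname{det}(I_{n_i}) = 1$ for every $i$, the product of determinants collapses to $1$, so the right-hand side reduces to $\mathcal{I}(f,\mathbf{n})$ itself. Matching this against the left-hand side $\sum_{\sigma \in \Sigma} \operatorname{sgn}(\sigma)\, f(\sigma^{-1}.I)$ yields the stated identity.

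There is no genuine obstacle here, and no machinery is needed beyond Theorem \ref{main_thm}. The content of the corollary is really a practical observation: because $\mathcal{I}(f,\mathbf{n})$ depends only on $f$ and on the shape $\mathbf{n}$, one may extract it by evaluating both sides of (\ref{formula}) at any convenient $\mathcal{A}$ whose determinants are all nonzero, and the tuple of identity matrices is the most convenient such choice precisely because each $\operatorname{det}(I_{n_i})$ contributes a factor of $1$. In principle, any tuple with $\operatorname{det}(\prescript{i}{}{A}) \neq 0$ for all $i$ would give $\mathcal{I}(f,\mathbf{n})$ after dividing the alternating sum by the product of the determinants; taking $\mathcal{A} = I$ simply spares us that division.
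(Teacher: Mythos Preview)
Your proof is correct and matches the paper's approach: the paper does not even write out a separate proof for this corollary, treating it as immediate from Theorem~\ref{main_thm}, and your substitution $\mathcal{A}=I$ together with $\det(I_{n_i})=1$ is exactly the intended one-line argument.
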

In applications, $f$ is often a determinant, or a product of determinants (or sum thereof), which represents a combinatorial problem for which the 
input data $\mathcal{A}$ has ambiguity, where the 
possible choices for $\mathcal{A}$ are usually parametrized by the elements of $\Sigma$. The scalar $\mathcal{I}(f,\mathbf{n})$ represents a combinatorial invariant of the problem. 
If $\mathcal{I}(f,\mathbf{n})$ is non-zero, then for any input data $\mathcal{A}$ (with ambiguity) for which all matrices 
$\prescript{i}{}{A}$ are non singular for $1 \leq i \leq k$ (we will simply say in this case that $\mathcal{A}$ is non-singular), the right-hand-side of \eqref{formula} is non-zero, 
which implies that one of the terms in the alternating sum on the left-hand side of \eqref{formula} is non-zero. 
This translates into the statement that, if $\mathcal{I}(f,\mathbf{n}) \neq 0$, then given any non-singular input data $\mathcal{A}$ with ambiguity, a choice 
for $\mathcal{A}$ can be made so that some output matrix, or matrices, 
corresponding to $f$, are also non-singular. This is an outline of the general argument which will be made precise 
in the two applications we consider: Onn's colorful determinantal formula and Svrtan's $n!$ formula, and their consequences.

\begin{proof}[Proof of Thm. \ref{main_thm}]  The (partial) skew-symmetrization of $f$ belongs to the following $1$-dimensional space
\begin{equation} \sum_{\sigma \in \Sigma} \operatorname{sgn}(\sigma) (\sigma.f) \in \bigotimes_{i=1}^k (\Lambda^{n_i} V_{n_i}) \label{alternating} \end{equation}
where the action of $\Sigma$ on elements
\[ f \in \bigotimes_{i=1}^k (V_{n_i})^{\otimes_{n_i}} \] 
is given by $(\sigma.f)(\mathcal{A}) = f(\sigma^{-1}.\mathcal{A})$. But an element of $\Lambda^n V_n$ is a scalar multiple of the $n$-by-$n$ 
determinant, thought of as a multilinear form of the column vectors. We thus see, from equation \eqref{alternating}, that the alternating sum is some constant $\mathcal{I}(f,\mathbf{n})$ 
(constant meaning here independent of the values of the matrices $\prescript{i}{}{A}$) times 
the product of the determinants $\operatorname{det}(\prescript{i}{}{A})$, for $1 \leq i \leq k$. In other words, we have proved our main result, namely formula \eqref{formula}.
\end{proof}

\section{Onn's Colorful Determinantal Formula}
In this section, we assume that the ground field $\mathbb{F}$ is a field of characteristic $0$, and that $\mathcal{A} = (\prescript{1}{}{A}, \ldots, \prescript{n}{}{A})$, 
where each $\prescript{i}{}{A}$ is an $n$-by-$n$ matrix for $1 \leq i \leq n$. We denote by 
$\prescript{i}{}{A}^j$ the $j$-th column of the matrix $\prescript{i}{}{A}$. We define the multilinear form $f$ by
\[ f(\mathcal{A}) = \prod_{i=1}^n \operatorname{det}(\prescript{1}{}{A}^i, \ldots, \prescript{n}{}{A}^i) \]
We introduce the group
\[ \Sigma = \Sigma_n \times \cdots \times \Sigma_n \qquad \text{(n factors)} \]
where $\Sigma_n$ is the symmetric group on $[n]$. We write an arbitrary element $\sigma \in \Sigma$ as $\sigma = (\sigma_1,\ldots,\sigma_n)$.
We then form the alternating sum
\begin{equation} \sum_{\sigma \in \Sigma} \operatorname{sgn}(\sigma) f(\sigma^{-1}.\mathcal{A}) = 
\sum_{\sigma \in \Sigma} \operatorname{sgn}(\sigma) \prod_{j=1}^n \operatorname{det}(\prescript{1}{}{A}^{\sigma_1(j)}, \ldots, \prescript{n}{}{A}^{\sigma_n(j)}) \label{alternating_Onn} \end{equation}
By Thm. \ref{main_thm}, we obtain that
\begin{equation} \sum_{\sigma \in \Sigma} \operatorname{sgn}(\sigma) f(\sigma^{-1}.\mathcal{A}) = \mathcal{I}(f,n) \operatorname{det}(\prescript{1}{}{A}) \cdots \operatorname{det}(\prescript{n}{}{A}) \label{pre_Onn} \end{equation}
It remains to calculate $\mathcal{I}(f,n)$ using Cor. \ref{corollary}. We thus assume that
\[ \prescript{i}{}{A} = I_n \qquad \text{($1 \leq i \leq n$)}\]
where $I_n$ is the $n$-by-$n$ identity matrix. It can be shown that (cf. for instance \cite{Zappa} or \cite{AL})
\[ \mathcal{I}(f,n) = \sum_{\sigma \in \Sigma} \operatorname{sgn}(\sigma) \prod_{j=1}^n \operatorname{det}(e_{\sigma_1(j)},\ldots,e_{\sigma_n(j)}) = l(n) \]
where $l(n)$ is the number of even $n$-by-$n$ Latin squares minus the number of odd $n$-by-$n$ Latin squares. Combining the previous formula with formulas \eqref{alternating_Onn} 
and \eqref{pre_Onn}, we obtain Onn's colorful determinantal identity:
\begin{equation} \sum_{\sigma \in \Sigma} \operatorname{sgn}(\sigma) \prod_{j=1}^n \operatorname{det}(\prescript{1}{}{A}^{\sigma_1(j)}, \ldots, \prescript{n}{}{A}^{\sigma_n(j)}) 
 = l(n) \prod_{i=1}^n \operatorname{det}(\prescript{i}{}{A}) \label{Onn} \end{equation}
 The Alon-Tarsi conjecture is that $l(n) \neq 0$ if $n$ is even. Onn used his formula to show that the Alon-Tarsi conjecture implies Rota's basis conjecture for even $n$, 
 which had been proved by Huang and Rota in \cite{HR1994}. The argument is as 
 follows. Assume $n$ is even and that the Alon-Tarsi conjecture is true, i.e. that $l(n) \neq 0$. If the matrices $\prescript{i}{}{A}$ are non-singular for all $i$, $1 \leq i \leq n$, then the right-hand side of 
 Onn's formula \eqref{Onn} is non-zero, which implies that at least one of the terms in the alternating sum on the left-hand side of the same formula must be non-zero. But this is precisely the conclusion of 
 Rota's basis conjecture.

\section{Svrtan's $n!$ formula}
In this section, we choose to work over $\mathbb{C}$, though the results will all hold over any field $\mathbb{F}$ of characteristic $0$ (or more generally 
any field in which $n!$ is non-zero). Inspired by both Rota's basis conjecture, as well as the Atiyah-Sutcliffe problem on configurations of 
points, the author came up with a hybrid problem, which he initially posted on an online forum (Math 
StackExchange at first, but then migrated to MathOverflow). The problem can be described as follows.

For $m$ a positive integer, denote by $V_m$ the space of complex polynomials of degree less than $m$ in one complex variable. 
Let $\Gamma_n$ be the complete oriented graph on $n$ vertices. The set of vertices may be taken to be 
$\{1,\ldots,n\}$, and the set of oriented edges becomes then 
$\{(i,j); 1\leq i,j \leq n, i \neq j\}$. We will denote the oriented edge $(i,j)$ simply by $e_{ij}$. One may 
then introduce an equivalence relation $\sim$, where each $e_{ij}$ is equivalent 
only to itself and to $e_{ji}$. We denote the equivalence class of $e_{ij}$ by $[e_{ij}]$, and the quotient 
$\overline{\Gamma}_n$ of $\Gamma_n$ by $\sim$ is the non-oriented complete 
graph on $n$ vertices, whose set of (non-oriented) edges is $\{ [e_{ij}]; 1 \leq i < j \leq n \}$.

\begin{figure}
\begin{center}
\includegraphics{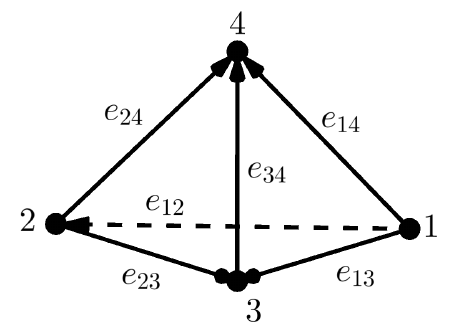}
\caption{A representation of the graph $\Gamma_4$}
\end{center}
\end{figure}
Assume there given, to each non-oriented edge $[e_{ij}]$, $i<j$, of $\overline{\Gamma}_n$, an 
ordered basis $(\prescript{ij}{}p_1, \prescript{ij}{}p_2)$ of the space $V_2$. We call such an assignment an assignment of 
``spinor bases''. We refer to $V_2$ as the space of spinors. 

By an assignment $c$ of spinors corresponding to an assignment of spinor bases, we mean that to each $e_{ij}$, with $i < j$, we assign either 
$\prescript{ij}{}p_1$ or $\prescript{ij}{}p_2$, and we assume that once such a choice is made, the ``other'' choice is made for the edge $e_{ji}$ ($i < j$). 
So for instance, if to $e_{ij}$ we assigned $\prescript{ij}{}p_2$, we must then assign $\prescript{ij}{}p_1$ to $e_{ji}$. Once an assignment of spinors is made, 
we denote the spinor associated to each edge $e_{ij}$ ($i \neq j$) simply by $p^c_{ij}$, where $c$ refers to the assignment of spinors.

We denote by $\mathcal{C}$ the set of all possible assignments of spinors (corresponding to some fixed assignment of spinor bases), which contains 
exactly $2^{\binom{n}{2}}$ elements. We define the sign $\operatorname{sgn}(c)$ of an assignment $c \in \mathcal{C}$ of spinors to be $1$ (resp. $-1$) if the 
number of times $\prescript{ij}{}p_2$ was assigned to the edge $e_{ij}$ for $i<j$ is even (resp. odd). We say that an assignment of spinors $c$ is even 
(resp. odd) if $\operatorname{sgn}(c) = 1$ (resp. $\operatorname{sgn}(c) = -1$).

To each assignment $c \in \mathcal{C}$ of spinors, one forms $n$ complex polynomials $p^c_i \in V_n$, for $1 \leq i \leq n$, of degree less than $n$ 
in the complex variable $t$, obtained as follows. The polynomial $p_i$ is defined as:
\[ p^c_i = \prod_{j \neq i} p^c_{ij} \qquad \text{($1 \leq i,j \leq n$)} \]
The problem can now be stated.
\\
\smallskip
\\
\underline{Problem:} Given an arbitrary inital assignment of spinor bases $(\prescript{ij}{}p_1, \prescript{ij}{}p_2)$ to each non-oriented 
edge $[e_{ij}]$, $i<j$, of the (non-oriented) complete graph $\overline{\Gamma}_n$ on $n$ vertices,
does there always exist an associated assignment $c$ of spinors to each oriented edge $e_{ij}$, $i \neq j$, of the oriented complete graph $\Gamma_n$, 
for which the $n$ polynomials $p^c_i$, for $1 \leq i \leq n$, are linearly independent over $\mathbb{C}$?
\\
\smallskip
\begin{theorem} \label{second_thm}The answer to the previous problem is positive, namely, given any initial assignment 
of spinor bases to each (non-oriented) edge of $\overline{\Gamma}_n$, there always exists an associated assignment 
$c$ of spinors to each (oriented) edge of $\Gamma_n$, for which the associated $n$ polynomials 
$p^c_i$, for $1 \leq i \leq n$, are linearly independent over $\mathbb{C}$. \end{theorem}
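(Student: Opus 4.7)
The plan is to apply Thm.~\ref{main_thm} in the setting of $k=\binom{n}{2}$ and $n_1=\cdots=n_k=2$, indexing the factors by the unordered edges $[e_{ij}]$, $i<j$. For each such edge, let $\prescript{ij}{}{A}$ be the $2$-by-$2$ matrix whose columns in $V_2$ are the coefficient vectors of $\prescript{ij}{}p_1,\prescript{ij}{}p_2$; each $\prescript{ij}{}{A}$ is non-singular by hypothesis. Identify $\mathcal{C}$ with $\Sigma=\Sigma_2^{\binom{n}{2}}$ so that $\sigma_{ij}=\operatorname{Id}$ corresponds to the choice $p^c_{ij}=\prescript{ij}{}p_1,\,p^c_{ji}=\prescript{ij}{}p_2$ (for $i<j$); one checks immediately that $\operatorname{sgn}(c)=\operatorname{sgn}(\sigma)$. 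Let $f$ be the unique multilinear form in the columns of the $\prescript{ij}{}{A}$'s for which $f(\sigma^{-1}.\mathcal{A})=\det(p^c_1,\ldots,p^c_n)$ in the monomial basis $1,t,\ldots,t^{n-1}$ of $V_n$; multilinearity is clear because each $p^c_i$ is a product of one column drawn from each of the $n-1$ edges incident to $i$. Applying Thm.~\ref{main_thm} yields
\[ \sum_{c\in\mathcal{C}}\operatorname{sgn}(c)\,\det(p^c_1,\ldots,p^c_n) \;=\; \mathcal{I}(f,\mathbf{n})\,\prod_{i<j}\det(\prescript{ij}{}{A}), \]
in which the right-hand product is non-zero; so if $\mathcal{I}(f,\mathbf{n})\neq 0$, some summand on the left is non-zero and furnishes a choice $c$ for which $(p^c_1,\ldots,p^c_n)$ is linearly independent, proving the theorem.

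To evaluate $\mathcal{I}(f,\mathbf{n})$ I invoke Cor.~\ref{corollary} and specialize every $\prescript{ij}{}{A}$ to $I_2$, so $\prescript{ij}{}p_1=1$ and $\prescript{ij}{}p_2=t$. A choice $c$ then encodes a tournament on $[n]$ (orient $[e_{ij}]$ as $i\to j$ when $p^c_{ij}=t$), and $p^c_i=t^{m_i}$ where $m_i$ is the out-degree of $i$. The resulting determinant vanishes unless $(m_1,\ldots,m_n)$ is a permutation of $(0,1,\ldots,n-1)$, which occurs precisely at the $n!$ transitive tournaments; writing $\pi$ for the permutation $i\mapsto m_i+1$, that non-zero determinant equals $\operatorname{sgn}(\pi)$.

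The main obstacle, and where Svrtan's sign argument is generalized to this combinatorial setting, is to show the product $\operatorname{sgn}(c)\operatorname{sgn}(\pi)$ is the same at every transitive tournament. I would verify this by comparing with the reference tournament $\tau_0$ defined by $i\to j$ for all $i<j$: any other transitive tournament arises from $\tau_0$ by a vertex relabeling $\rho\in\Sigma_n$, which flips exactly the orientations indexed by the inversions of $\rho$ and concurrently composes the associated $\pi$ with $\rho$. Each operation contributes the factor $\operatorname{sgn}(\rho)$, so their product stays invariant. Hence $\mathcal{I}(f,\mathbf{n})=n!\neq 0$ in characteristic $0$, and Thm.~\ref{second_thm} follows from the resulting non-vanishing of the alternating sum.
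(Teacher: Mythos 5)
Your proposal is correct and follows essentially the same route as the paper: you set up the same multilinear form $f$, invoke Theorem~\ref{main_thm} to get the alternating-sum identity, and then evaluate $\mathcal{I}(f,\mathbf{n})$ via Corollary~\ref{corollary} by specializing to $\prescript{ij}{}p_1=1$, $\prescript{ij}{}p_2=t$ and identifying the $n!$ non-vanishing terms with transitive tournaments, which is exactly the paper's proof of Svrtan's $n!$ formula. Your final vertex-relabeling argument just makes explicit the sign cancellation that the paper states more tersely; the underlying idea is the same.
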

\smallskip

Svrtan's theorem \cite[pp. 21--23]{S2014}, slightly modified to be adapted to our notation, can be written as follows:
\begin{theorem}[Svrtan's $n!$ Formula]
\begin{equation} \label{Svrtan} \sum_{c \in \mathcal{C}} \operatorname{sgn}(c) \operatorname{det}(p^c_1,\ldots,p^c_n) = n! \prod_{1 \leq i<j \leq n} \operatorname{det}(\prescript{ij}{}p_1, \prescript{ij}{}p_2) \end{equation}
\end{theorem}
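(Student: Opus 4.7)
The plan is to realize Svrtan's formula as a direct instance of Theorem \ref{main_thm}. Index the unoriented edges of $\overline{\Gamma}_n$ by the $\binom{n}{2}$ pairs $(i,j)$ with $i<j$, and for each such edge form the $2\times 2$ matrix $\prescript{ij}{}{A}$ whose columns are $(\prescript{ij}{}{p}_1,\prescript{ij}{}{p}_2)$. The resulting collection $\mathcal{A}$ has shape $\mathbf{n}=(2,\ldots,2)$ and the group $\Sigma=\Sigma_2^{\binom{n}{2}}$ acts by column swaps. There is a sign-preserving bijection between $\mathcal{C}$ and $\Sigma$: to $c\in\mathcal{C}$ associate the $\sigma$ whose $(i,j)$-component (for $i<j$) is the identity if $p^c_{ij}=\prescript{ij}{}{p}_1$ and the transposition otherwise. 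Since every transposition in $\Sigma_2$ is self-inverse, each one contributes a factor $-1$ both to $\operatorname{sgn}(\sigma)$ and to $\operatorname{sgn}(c)$, so this bijection is indeed sign-preserving.

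Next, define the multilinear form
\[ f(\mathcal{A})=\operatorname{det}(p_1,\ldots,p_n),\qquad p_i=\prod_{j\neq i} p_{ij}, \]
where for $i<j$ we read $p_{ij}$ as the first column of $\prescript{ij}{}{A}$ and $p_{ji}$ as the second. Each $p_i$ lies in $V_n$, and $f$ is multilinear in the columns of $\mathcal{A}$. Theorem \ref{main_thm} then yields at once
\[ \sum_{c\in\mathcal{C}}\operatorname{sgn}(c)\,\operatorname{det}(p^c_1,\ldots,p^c_n)=\mathcal{I}(f,\mathbf{n})\prod_{i<j}\operatorname{det}(\prescript{ij}{}{p}_1,\prescript{ij}{}{p}_2), \]
so the problem reduces to showing $\mathcal{I}(f,\mathbf{n})=n!$.

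By Corollary \ref{corollary}, this constant may be evaluated by specializing every $\prescript{ij}{}{A}$ to $I_2$, viewed with respect to the basis $(1,t)$ of $V_2$. Then each $p^c_{ij}$ equals either $1$ or $t$, so $c\in\mathcal{C}$ may be encoded as an orientation of $\overline{\Gamma}_n$ by orienting $i\to j$ whenever $p^c_{ij}=t$; a direct check gives $p^c_i=t^{d_i(c)}$, where $d_i(c)$ is the outdegree of vertex $i$. Expanding in the basis $(1,t,\ldots,t^{n-1})$ of $V_n$, the determinant $\operatorname{det}(t^{d_1},\ldots,t^{d_n})$ vanishes unless $(d_1,\ldots,d_n)$ is a permutation of $(0,1,\ldots,n-1)$, which occurs exactly for the $n!$ transitive tournaments, one per linear order $\pi\in\Sigma_n$ of the vertices. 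The main, and only slightly delicate, step is the sign bookkeeping for these surviving terms: for the tournament coming from $\pi$ (orient $i\to j$ iff $\pi(i)<\pi(j)$), one has $d_i=n-\pi(i)$, so $\operatorname{det}(t^{d_1},\ldots,t^{d_n})$ equals the sign of the column permutation $i\mapsto n+1-\pi(i)$, namely $(-1)^{\binom{n}{2}}\operatorname{sgn}(\pi)$, while $\operatorname{sgn}(c)$ counts forward edges $i<j$ and also equals $(-1)^{\binom{n}{2}}\operatorname{sgn}(\pi)$. Their product is therefore $+1$ for each $\pi$, and summing over $\Sigma_n$ yields $\mathcal{I}(f,\mathbf{n})=n!$, completing the proof.
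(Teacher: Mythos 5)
Your proof is correct and follows essentially the same route as the paper's: cast the problem as an instance of Theorem \ref{main_thm} with shape $\mathbf{n}=(2,\ldots,2)$, invoke Corollary \ref{corollary} with the specialization $\prescript{ij}{}{p}_1=1$, $\prescript{ij}{}{p}_2=t$, identify assignments with tournament orientations of $\overline{\Gamma}_n$, and observe that only the $n!$ transitive tournaments contribute. The one place where you go beyond the paper is in making the sign bookkeeping fully explicit (both for the bijection $\mathcal{C}\leftrightarrow\Sigma_2^{\binom{n}{2}}$ and for the factors $(-1)^{\binom{n}{2}}\operatorname{sgn}(\pi)$ that cancel in each surviving term), which the paper leaves terse; this is a welcome clarification rather than a different method.
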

\begin{proof}[Proof of Svrtan's Theorem] The proof follows from Thm. \ref{main_thm}, with $\mathcal{A} = ((\prescript{ij}{}{p}_1, \prescript{ij}{}{p}_2); 1 \leq i<j \leq n)$, and 
\[ f(\mathcal{A}) = \operatorname{det}(p^{c_0}_1,\ldots,p^{c_0}_n) \]
where $c_0 \in \mathcal{C}$ corresponds to assigning $\prescript{ij}{}{p}_1$ to each $e_{ij}$, $1 \leq i<j \leq n$ ($c_0$ is thus even). Indeed formula \eqref{formula} then implies
\begin{equation} \sum_{c \in \mathcal{C}} \operatorname{sgn}(c) \operatorname{det}(p^c_1,\ldots,p^c_n) = \mathcal{I}(f,n) \prod_{1 \leq i<j \leq n} \operatorname{det}(\prescript{ij}{}p_1, \prescript{ij}{}p_2) 
\label{pre_Svrtan} \end{equation}
It remains only to check that $\mathcal{I}(f,n) = n!$. In order to calculate 
$\mathcal{I}(f,n)$, it suffices by Cor. \ref{corollary} to assume that
\begin{equation} \left\{ \begin{array}{cc} \prescript{ij}{}{p}_1 &= 1 \\
\prescript{ij}{}{p}_2 &= t \end{array} \right. \label{special} \end{equation}
($1 \leq i < j \leq n$). Given any (non-oriented) edge $[e_{ij}]$ ($i<j$) of $\overline{\Gamma}_n$, we orient it (i.e. assign to it a direction) from $i$ to $j$ 
(resp. from $j$ to $i$) if $\prescript{ij}{}{p}_1 = 1$ is assigned to $e_{ij}$ ($\prescript{ij}{}{p}_2 = t$ is assigned to $e_{ij}$). We see that there must be exactly 
one vertex having exactly $r$ outgoing vertices, for every $r$ with $0 \leq r \leq n-1$. Therefore only $n!$ terms are non-zero in the alternating sum on the left-hand side of 
\eqref{Svrtan}, the ones corresponding to some complete order $\prec$ on the set $[n] = \{1,\ldots,n\}$, with the corresponding assignment of spinors being
\[ i \prec j \implies \prescript{ij}{}{p} = 1 \]
More precisely, $\det(p^{c}_1,\ldots,p^{c}_n) = 1$ (resp. $-1$) if $c$ corresponds to a complete order $\prec$ which in turn corresponds to an even (resp. odd) permutation of $[n]$. 
We therefore deduce that
\[ \mathcal{I}(f,n) = \sum_{c \in \mathcal{C}} \operatorname{sgn}(c) \operatorname{det}(p^c_1,\ldots,p^c_n) = n! \]
corresponding to the assignment of spinor bases in \eqref{special}. Svrtan's formula then follows, using equation \eqref{pre_Svrtan}.
\end{proof}

We can now prove Thm. \ref{second_thm}. Indeed, given an assignment of spinor bases, it is clear that the right-hand side of Svrtan's $n!$ formula \eqref{Svrtan} is non-zero, so that 
one of the terms in the alternating sum on the left-hand side of the same formula must be non-zero. This implies that there is some associated assignment $c \in \mathcal{C}$ of spinors, 
for which the polynomials $p^c_1,\ldots,p^c_n$ are linearly independent over $\mathbb{C}$, proving the theorem.

\acknowledge{The author thanks Timothy Chow for an interesting email exchange
in which Dr. Chow informed the author about some of the 
literature surrounding Rota's basis conjecture, and suggested a line of attack similar to one 
initiated by Rota himself and applied to his conjecture, which had led 
to a link between his conjecture and the Alon-Tarsi conjecture concerning the number 
of even/odd latin squares of order $n$ (cf. \cite{HR1994} for the original work, 
and \cite{O1997} for a nicely written alternative short argument). The author is also indebted to the anonymous referee, who 
in particular provided a simpler proof of Svrtan's $n!$ formula than the author's original 
proof, and this inspired the author to find a more general formula, using a similar idea.}
\smallskip

\vspace{5mm}

\def\Dbar{\leavevmode\lower.6ex\hbox to 0pt{\hskip-.23ex \accent"16\hss}D}
\providecommand{\bysame}{\leavevmode\hbox to3em{\hrulefill}\thinspace}
\providecommand{\MR}{\relax\ifhmode\unskip\space\fi MR }
\providecommand{\MRhref}[2]{%
  \href{http://www.ams.org/mathscinet-getitem?mr=#1}{#2}
}
\providecommand{\href}[2]{#2}

\end{document}